\newtheorem{Ex}  {Example}
\newtheorem{Def}[Ex]{Definition}
\newtheorem{Th}      [Ex]{Theorem} 
\newtheorem*{Th*}      {Theorem} 
\newtheorem{Le}      [Ex]{Lemma}
\begin{document}
\author{Alex Schreiber}
\address{Zentrum Mathematik - M3, Technische Universität München, 80290 München, Germany }
\email{schreiber@ma.tum.de}
\title [Probability depends monotonically on the bifix indicator.]{The probability of finding a fixed pattern in random data depends monotonically on the bifix indicator.}
\date{\today}
\maketitle
\begin{abstract}
We consider the problem of finding a fixed L-ary sequence in a stream of random L-ary data. It is known that the expected search time is a strictly increasing function of the lengths of the bifices of the pattern. In this paper we prove the related statement that the probability of finding the pattern in a finite random word is a strictly decreasing function of the lengths of the bifices of the pattern.
\end{abstract}

\section{Problem statement}

\begin{Def}[Bifix indicators]
Let $b = (b_1,b_2,\dots,b_n)$ be a word of length $n \geq 2$ of the alphabet $\{ 0,...,L-1 \}, L \geq 2$. 
We define the bifix indicator $h = (h_1,\dots,h_{n-1})$ of $b$ as a binary word of length $n-1$ such that
\[
h_i = \begin{cases}
 1 \text{ if }  (b_1, \dots, b_i) = (b_{n-i+1}, \dots, b_n)  \\
 0 \text{ otherwise. }
\end{cases}
\]
\end{Def}
Under $ h \leq h' $ we will understand $ h_i \leq h'_i $ for all $1 \leq i \leq n-1$ and under $h < h'$ that both $ h \leq h'$ and $h \neq h'$.

By speaking of a random word (of finite length) or a random sequence (of infinite length), here and elsewhere in this paper we mean a word or sequence with the members being selected independently with uniform probability $\frac1L$ of the alphabet.

\begin{Def}[Probabilities $P_k$ and $p_k$]
Fixing $L$, a word $b$ and the integer $k$, we let $P_k$ be the probability that the word $b$ appears (at least once) as substring in a random word of length $k$. We let $p_k$ be the probability that $b$ appears exactly once, namely  at the end in a random word of length $k$.
\end{Def}
One has $P_k = \sum_{i=1}^{k}p_i$ because if the searched substring appears at least once then there is an index $0 \leq i \leq k$ when it appears first.
Obviously, it holds $P_k=0$ and $p_k=0$ for $0 \leq k \leq n-1.$

One can interpret those probabilities also in the situation that one is looking for $b$ in an infinite sequence $(d_1, d_2, \dots)$ where $p_k$ is the probability for waiting time $k$, i.e. the probability that $b$ appears for the first time in $(d_1,\dots,d_k)$, i.e. $b=(d_{k-n+1}, \dots, d_k)$.

The aim of this paper is to proof the following theorem.

\begin{Th}
\label{mainTh}
Let $b$ and $b'$ be two words of length $n$ of the alphabet \mbox{$\{ 0,...,L-1 \}$.} 
Let $h_i$ and $h'_i$ be their bifix indicators and $P_k$ and $P'_k$ the corresponding probabilities as defined before.

We claim that
\begin{enumerate}[a)]
\item
if $ h = h' $ then $ P_k = P'_k $ for any $k \geq 0$ and that 
\item
if $ h < h' $ then $ P_k > P'_k $ for $k \geq k_0$
where 
\[k_0 := n + \min \{ 1 \leq i \leq n-1 | h_i=0 \text{ and } h'_i=1\}.\] 
For $0 \leq k < k_0$ it holds $P_k = P'_k$.
\end{enumerate}
\end{Th}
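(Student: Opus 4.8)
The plan is to work with generating functions. For a fixed pattern $b$ with bifix indicator $h$, introduce the ordinary generating functions $P(z) = \sum_{k \geq 0} P_k z^k$ and $p(z) = \sum_{k \geq 0} p_k z^k$; since $P_k = \sum_{i \leq k} p_i$ we have $P(z) = p(z)/(1-z)$, so it suffices to understand $p(z)$. The first step is to derive a closed form for $p(z)$ in terms of the \emph{correlation polynomial} of $b$ with itself. Recall the standard renewal-type identity (Guibas--Odlyzko): if one writes $c(z) = \sum_{i : h_i = 1} z^{\,n-i}$ for the autocorrelation polynomial associated to the bifix indicator (the sum being over the proper borders of $b$, together with the trivial term $z^n$... let me be careful with indexing), then conditioning on the first occurrence of $b$ gives a linear relation between $p(z)$ and the generating function for "the pattern occurs and we track overlaps", which after elimination yields
\[
p(z) = \frac{z^n}{\,L^n (1-z) c(z) + z^n\,},
\qquad\text{equivalently}\qquad
P(z) = \frac{z^n}{(1-z)\bigl(L^n(1-z)c(z)+z^n\bigr)},
\]
where $c(z) = \sum_{i=1}^{n} \varepsilon_i z^{\,n-i}$ with $\varepsilon_n = 1$ and, for $1 \leq i \leq n-1$, $\varepsilon_i = h_i$. (I will derive this carefully rather than quote it, since the precise normalization is what drives the monotonicity.) The key structural observation is that $p(z)$, and hence $P(z)$, depends on $b$ \emph{only} through the polynomial $c(z)$, i.e.\ only through the bifix indicator $h$ — which already gives part a): if $h = h'$ then $c(z) = c'(z)$ and so $P_k = P'_k$ for all $k$.

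For part b), suppose $h < h'$, and let $c(z)$, $c'(z)$ be the two correlation polynomials; then $c'(z) - c(z) = \sum_{i : h_i = 0,\, h'_i = 1} z^{\,n-i}$ is a nonzero polynomial with nonnegative coefficients, of degree $n - i^*$ where $i^* = \min\{i : h_i = 0, h'_i = 1\}$ is the index appearing in the definition of $k_0$. Write $D(z) := L^n(1-z)c(z) + z^n$ and $D'(z) := L^n(1-z)c'(z) + z^n$, so that $P(z) = z^n / ((1-z)D(z))$ and similarly for $P'$. Then
\[
P(z) - P'(z) = \frac{z^n}{1-z}\cdot\frac{D'(z) - D(z)}{D(z)D'(z)}
= \frac{z^n}{1-z}\cdot\frac{L^n(1-z)\bigl(c'(z)-c(z)\bigr)}{D(z)D'(z)}
= \frac{L^n z^n\,\bigl(c'(z)-c(z)\bigr)}{D(z)D'(z)}.
\]
The plan is then to show that the power series expansion of the right-hand side has all coefficients $\geq 0$, with the first nonzero one occurring in degree exactly $k_0 = n + i^*$. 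For this I would show that $1/D(z)$ and $1/D'(z)$ each have nonnegative power-series coefficients: indeed $D(z) = L^n - z(\cdots)$ can be written so that $1/D(z)$ is a geometric-type series; more robustly, $1/D(z) = (1-z)P(z)/z^n + \text{lower corrections}$ relates it back to probabilities, or one argues directly that $L^n(1-z)c(z) + z^n = L^n c(z)\bigl(1 - z(1 - z^{n}/(L^n c(z)\,\cdot)) \cdots\bigr)$ — cleaner is to note $D(0) = L^n > 0$ and that $p(z) = z^n/D(z)$ has nonnegative coefficients because $p_k$ is a probability, hence $1/D(z) = p(z)/z^n$ has nonnegative coefficients (shifted). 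Then the product $z^n(c'(z)-c(z))\cdot(1/D(z))\cdot(1/D'(z))\cdot L^n$ is a product of three power series with nonnegative coefficients, so its coefficients are nonnegative; and the lowest-degree term is $L^n \cdot z^n \cdot z^{n-i^*}\cdot 1 \cdot 1 = L^n z^{k_0}$ with positive coefficient, while all terms of lower degree vanish. This simultaneously gives $P_k = P'_k$ for $k < k_0$ and $P_k > P'_k$ for all $k \geq k_0$ (strictness for every such $k$ following because once a nonnegative-coefficient series is positive in some degree and we keep multiplying/adding nonnegative contributions, the relevant coefficient stays positive — more precisely, $[z^k]\bigl(z^n(c'-c)/D/D' \cdot L^n\bigr) \geq [z^{k-1}](\cdots)\cdot[\text{something}]$; I will make the "stays positive" step precise using that $1/D$ has a nonzero constant term).

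The main obstacle I anticipate is the clean derivation and correct normalization of the formula $p(z) = z^n/D(z)$ — getting the combinatorial identity for the correlation polynomial right, including the boundary/indexing conventions, and justifying convergence/formal-power-series manipulations — together with the final \emph{strictness for all} $k \geq k_0$ rather than merely for infinitely many $k$. The latter requires knowing that the series $1/D'(z)$ has $[z^0] \neq 0$ (clear, it is $L^{-n}$) so that convolving the eventually-positive series for $P - P'$ with it cannot reintroduce zeros; I would phrase this as: if $f(z)$ has nonnegative coefficients with $[z^{k_0}]f > 0$ and $g(z)$ has nonnegative coefficients with $[z^0]g > 0$, then $[z^k](fg) > 0$ for all $k \geq k_0$, and apply it appropriately. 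Everything else — part a), the "equal below $k_0$" claim, and nonnegativity — falls out of the generating-function identity essentially for free.
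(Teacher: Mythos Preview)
Your approach is genuinely different from the paper's. The paper never touches generating functions: it builds an auxiliary Markov chain $X(\bar h)$ on $\{0,\dots,n\}$, shows that the hitting probabilities of state $n$ satisfy the same recursion as the $P_k$, and then compares the two chains $X(\bar h)$ and $X(\bar h')$ by an inductive monotonicity argument on the quantities $P_k(i)=\Pr(X_{k+j}=n\mid X_j=i)$. Your route via the Guibas--Odlyzko identity is shorter and more transparent once the formula is in hand; the paper's route is more elementary but requires setting up and analysing the Markov chain.

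That said, your write-up contains two concrete errors you would have to fix.

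\textbf{(1) The correlation polynomial is wrong.} With your $c(z)=\sum_{i=1}^n\varepsilon_i z^{\,n-i}$ (no $L$-weights) the formula $p(z)=z^n/\bigl(L^n(1-z)c(z)+z^n\bigr)$ is false already for $n=2$, $L=2$, $b=(0,0)$: your $D(z)=4-3z^2$ gives $p_3=0$, but the correct value is $p_3=1/8$. Summing the recursion~(1) of the paper actually yields
\[
p(z)=\frac{z^n}{(1-z)\,C(z)+z^n},\qquad C(z)=L^n+\sum_{i=1}^{n-1}h_i\,L^{\,i}\,z^{\,n-i},
\]
i.e.\ the correlation polynomial must carry the weights $L^i$. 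This does not affect the structure of your argument, but it does change the constants.

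\textbf{(2) You confused the order with the degree of $c'-c$.} You correctly say that $c'(z)-c(z)=\sum_{i:\,h_i=0,\,h'_i=1}z^{\,n-i}$ has \emph{degree} $n-i^\ast$ with $i^\ast=\min\{i:h_i=0,h'_i=1\}$. But the \emph{lowest}-degree term, which is what governs the first nonzero coefficient of $P-P'$, is $z^{\,n-i_{\max}}$ with $i_{\max}=\max\{i:h_i=0,h'_i=1\}$. Your sentence ``the lowest-degree term is $L^n\cdot z^n\cdot z^{n-i^\ast}\cdot 1\cdot 1=L^n z^{k_0}$'' is therefore wrong on two counts: you picked the top term instead of the bottom one, and in any case $n+(n-i^\ast)\ne n+i^\ast$ in general. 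After the correction your method gives the threshold $2n-i_{\max}$, not $n+i^\ast$. (A direct check with $n=6$, $h=(0,0,0,0,0)$, $h'=(1,0,1,0,0)$ confirms $P_7=P'_7$, $P_8=P'_8$, $P_9>P'_9$.) Curiously, the paper's own Markov-chain comparison (its Theorem~9) also produces $k_0=2n-i_{\max}$; the identification with $n+i^\ast$ in the paper's final paragraph is itself in error.

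\textbf{A smaller point.} For the ``strict for every $k\ge k_0$'' step you need that $1/D(z)=\sum_{m\ge 0}p_{m+n}z^m$ has \emph{all} coefficients strictly positive, not just the constant term. This amounts to $p_m>0$ for every $m\ge n$, which is true (take the word $c^{m-n}b$ with $c\ne b_1$), but you should say so explicitly; the abstract convolution lemma you sketched only uses $[z^0]g>0$ and does not by itself forbid later zeros.
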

Note that part a) is easy and well-known. Our interest lies in part b).
In his PhD thesis \cite{hanus2010selected}, in 2010, Pavol Hanus states a special version of part b) of theorem \ref{mainTh}, namely for the case that $h = (0, \dots, 0)$, without giving a complete proof.
I am grateful to Alexander Mathis who pointed out this subject and reference to me.

Peter Nielsen gave in 
\cite{1055064}, in 1973,
a formula for the expectation of the waiting time until the first occurence of $b$ in a random word, the right-hand side of 
\[
\sum_{i=1}^{\infty}p_ii = L^n+\sum_{i=1}^{n-1} h_i L^i.
\]

This formula is actually a bit different from the one in \cite{1055064} because of deviations in our definitions. What we call $P_k$ would be $P_{k-n}$ for Nielsen. Thus he would also substract $n$ from the right-hand side.
Note that this formula quantizes the fact that the expected search time is an increasing function of the bifix pattern.
If one were just interested in this qualitative fact, this would follow independently from our paper as one has

\[
\sum_{i=1}^{\infty}p_ii = \sum_{k=0}^{\infty} (1-P_k).
\]

In order to prove our theorem it would be desirable to calculate an explicit formula for the value of $P_k$ like the one for the expected search time but we believe that there is none.
At least there is no affine dependence of $p_k$ (or, equivalently, $P_k$) on $h$ as there is for the expectation of the waiting time.
As an example, let $L=2$ and consider for $b$ the words 
\[
b^1=(1,0,0,0,0), b^2=(1,0,0,0,1),b^3=(1,0,0,1,0),b^4=(1,1,0,1,1).
\]
Then one has the corresponding bifix patterns
\[
h^1 = (0,0,0,0), h^2 = (1,0,0,0), h^3 = (0,1,0,0), h^4 = (1,1,0,0),
\]
consequently also (componentwise) $h^1+h^4=h^2+h^3,$ but in general $P_{k}^1+P_{k}^4 \neq P_{k}^2+P_{k}^3$, e.g. for $k=12$, as we calculated with the help of a computer.

However, recursive formulas are known for $p_k$ and $P_k$. In order to get explicit formulas, one would probably use those recursions and try to derive closed expressions from them. For that purpose, one would have to solve polynomial equations of degree $n$. As mentioned before, we do not believe it to be feasible to find explicit formulas.

Instead, we will show that the same recursions are obtained for certain Markov chains. Then we will compare the probabilities for the Markov chains, obtaining in that way a proof for the main theorem.

\section{Recursive formulas for the probabilities}

The probability $p_k$ amounts to the probability $\frac1{L^n}$ of ocurrence of the search pattern at position $k$ reduced by the probability of an additional ocurrence at an earlier time. Paying attention to the overlap structure given by the bifix indicator $h$ of the word $b$, one gets the formula 
\begin{align}
\label{eq:long-recursion}
p_k & = \frac1{L^n} - \frac1{L^n} \sum_{i=n}^{k-n} p_i - \sum_{i=1}^{n-1} h_i \frac1{L^{n-i}} p_{k-n+i} 
%\\
%&  = \frac1{L^n} - \frac1{L^n} p_n - \frac1{L^n} p_{n+1} - \dots - \frac1{L^n} p_{k-n} \\
%& - h_{1} L^{-n+1} p_{k-n+1} - \dots - h_{n-1} L^{-1} p_{k-1} \\
\text{ for any } k \geq n,
\end{align}
that appeared in \cite{DBLP:journals/corr/abs-cs-0508099}, in 2005. By knowing that $p_1 = \dots = p_{n-1} = 0$ the formula allows to calculate all the $p_k$. Among other things, this shows that the $p_k$ depend only on $h$, not on more information of $b$, as claimed by part a) of the theorem.  By taking the difference of the equation with the shifted equation where $k$ is replaced by $k+1$
one gets the following linear non-homogeneous $n+1$-term recursion for $p_k$ which was already given in \cite{383999}, in 1995:
\begin{align}
\label{eq:short-recursion}
p_{k+1} = p_k - \frac1{L^n} p_{k+1-n} - \sum_{i=1}^{n-1} h_i \frac1{L^{n-i}} (p_{k-n+i+1}-p_{k-n+i}) \text{ for any } k \geq n \\
p_1 = \dots = p_{n-1} = 0, p_n = \frac1{L^n}.
\end{align}

By summing up for $k = n, n+1, \dots, K$, but afterwards substituting $k$ for $K$, and noting that $p_n=\frac1{L^n}$ one gets

\begin{align}
\label{eq:Short-Recursion}
P_{k+1} = \frac1{L^n} + P_k - \frac1{L^n} P_{k+1-n} - \sum_{i=1}^{n-1} h_i \frac1{L^{n-i}} (P_{k-n+i+1}-P_{k-n+i}) 
\text{ for any } k \geq n \\ 
\label{eq:Short-Recursion2}
P_1 = \dots = P_{n-1} = 0, P_n = \frac1{L^n}.
\end{align}

\section{Markov chains for our problem}

In the following, we will turn our focus towards Markov chains. 
The motivation is that, given a fixed word $b$ and semi-infinite random data $d=(d_1, d_2, \dots)$ we are looking for the first appearence of $b$ in $d$, to that end observing $d$ ``from left to right''. 
Our first idea, that was not entirely conducive, was to associate to $b$ a Markov chain $\hat X(b) = (X_1,X_2,\dots)$ whose state $X_k$ would measure how good the chances are to encounter the subword $b$ in $(d_1, \dots, d_k)$ or some more letters of $d$. More specifically, $X_k = n$ would mean that $(d_1, \dots, d_k)$ contains $b$ and otherwise $X_k$ would be the greatest $i \; (0 \leq i \leq n-1)$ such that $(b_1, \dots b_i) = (d_{k-i+1}, \dots, d_k)$. The larger this $i$ the better would be the chances to find an instance of $b$ soon.

We want to prove a validity of a formula which uses only the bifix indicator of a search string $b$, not $b$ itself. However, there could be two different words $b^1$ and $b^2$ representing the same bifix class $h$ but, according to the construction described beforehand, having different associated Markov chains $\hat X(b^1)$ and $\hat X(b^2)$.

For that reason, we will associate in an improved ansatz a Markov chain $X(\bar h )$ to each bifix class $h$ such that, given a bifix class $h$ instead of a word $b$, one can often (but not always) find a representing word $b$ for $h$ such that $X( \bar h) = \hat X(b)$. 

This Markov chain will have the probability that $P_k = \Pr(X_k=n)$ whenever $P_k$ is defined as in the last section for a word $b$ and the Markov chain is the associated one to the bifix class $h$ of $b$.
 
\begin{Def}
We consider $n$-ary words $s= (s_0, \dots, s_{n-1})$ with 
\begin{equation}
\label{eq:jumpdown}
0 \leq s_i \leq i \text{ for } 0 \leq i \leq n-1.
\end{equation}
To such a word we associate the stationary Markov chain $X(s)=(X_0,X_1,\dots)$ where $X_0, X_1, \dots$ are random variables which map to the set $\{0,1,\dots,n\}$ with initial condition $X_0 \equiv 0$ (a.s.) and the transition probability $p_{ij} = \Pr(X_{k+1}=j | X_k=i)$ from state $i$ to state $j$ given by
\[
p_{ij} = \begin{cases}
1 & \text{ if } i=j=n \\
\frac1L & \text{ if } i \leq n-1, i+1=j \\
\frac{1}L & \text{ if } i \leq n-1, s_i = j>0 \\
\frac{L-2}L & \text{ if } i \leq n-1, s_i>j=0 \\
\frac{L-1}L & \text{ if } i \leq n-1, s_i=j=0 \\
0 & \text{ otherwise. }
\end{cases}
\]
Furthermore, we define
\[
P_k:=\Pr(X_k=n).
\]
\end{Def}

It will turn out soon that these numbers $P_k$ are related to those defined in the preceding section, therefore justifying our notation.

The initial condition and the transition probabilities determine the joint distribution of the Markov chain uniquely. The interpretation is that in each step with probability $\frac{L-2}L$ the transition is to state $0$, with probability $\frac1L$ the transition is from state $i$ to state $s_i$, and with probability $\frac1L$ the transition is from state $i$ to state $i+1$. Whenever the numbers $0$ and $s(i)$ are equal, the probabilities add up correspondingly.

To a given binary word $h= (h_1, \dots h_{n-1})$ we associate 
$s := \bar h := (0, 1-h_{n-1}, 1-h_{n-2}, \dots, 1-h_1)$.

\begin{Def}
Fix the search word $b$. For $k \geq 0$ and $ 0 \leq i \leq n$ we let
$P_k(i) := \Pr(X_{k+j}=n | X_j=i)$. 
Especially, one has $P_k(0) = P_k$ and $P_k(n) = 1$.
\end{Def}

\begin{Th}
\label{same}
Consider a binary word $h= (h_1, \dots h_{n-1})$.
The probabilities $P_k = P_k (\bar h, L)$ of the markov chain $X(\bar h)$ associated to the word $h$ are the same as the numbers $P_k = P_k(h,L)$ defined recursively in (\ref{eq:Short-Recursion}) and (\ref{eq:Short-Recursion2}) for $h$.
Thus, the probabilities $P_k = P_k(b,L)$ of finding a word $b$ with bifix pattern $h$ in a random word of length $k$ is the same as the probabilities $P_k = P_k (\bar h, L)$ for the corresponding markov chain $X(\bar h)$.
\end{Th}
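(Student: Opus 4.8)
The plan is to show that the sequence $P_k := \Pr(X_k = n)$ attached to the Markov chain $X(\bar h)$ satisfies the very same recursion (\ref{eq:Short-Recursion}) together with the initial values (\ref{eq:Short-Recursion2}); since a linear recursion of fixed order with prescribed initial values has a unique solution, this identifies the two sequences. I would actually prefer to work with $p_k := P_k - P_{k-1}$ (the probability that $X$ hits $n$ for the first time at step $k$) and verify the long recursion (\ref{eq:long-recursion}), because that recursion has a transparent combinatorial reading: the term $\frac{1}{L^n}$ is the unconditional probability of following the ``ascending'' path $0 \to 1 \to \dots \to n$ in the last $n$ steps, and the subtracted terms remove the contributions where $n$ was already reached earlier. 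Concretely, $p_k = \Pr(X_{k-1}=n-1,\ \text{step } k \text{ is } n-1 \to n,\ X_{k-1} \neq n)$, and I would decompose the event ``$X_{k-1}=n-1$ and $X_j \neq n$ for $j<k$'' according to the last time before $k-1$ that the chain sat at a state from which it could climb all the way to $n-1$ without resetting.

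The key steps, in order, are: (i) analyse the structure of $X(\bar h)$ — from state $i$ one goes up to $i+1$ with probability $\frac1L$, jumps to $s_i = \bar h_i$ (which is $0$ or $1$ by the definition of $\bar h$) with probability $\frac1L$, and drops to $0$ with probability $\frac{L-2}{L}$, except that when $s_i = 0$ the two low transitions merge into probability $\frac{L-1}{L}$ — and note that the only way to reach state $n$ is via the ascending step $n-1 \to n$, which happens with probability exactly $\frac1L$ whenever $X_{k-1} = n-1$; (ii) express $p_k$ as a sum over the ``time of the last fresh start'' of contributions $\frac1L \cdot (\text{run of } n \text{ ascending steps up to state } n)$ weighted by the probability of being at the relevant state, and match these contributions with the terms of (\ref{eq:long-recursion}) — the term $h_i \frac{1}{L^{n-i}} p_{k-n+i}$ should correspond to the scenario where, after the chain first reached $n$ at time $k-n+i$, it immediately took the jump $n \to$ (next state) \dots wait, rather, one uses that $\bar h$ encodes precisely when a jump to state $1$ (not $0$) is available, and the factor $L^{i}$ (resp.\ $L^{n-i}$) counts the overlap of length $i$; (iii) check the base cases $P_1 = \dots = P_{n-1} = 0$ (the chain needs at least $n$ steps to reach state $n$, since it increases by at most $1$ per step from $0$) and $P_n = \frac{1}{L^n}$ (the unique length-$n$ path $0\to1\to\dots\to n$, each step probability $\frac1L$); (iv) conclude by uniqueness of the solution of the recursion, and then invoke part a) / the earlier remark that $P_k(b,L)$ depends only on $h$ to get the final sentence of the theorem.

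I expect the main obstacle to be step (ii): correctly bookkeeping the overlap contributions, i.e.\ showing that the Markov chain's ``jump to $1$ when $\bar h_i = 1$'' feature reproduces exactly the term $-\sum_{i=1}^{n-1} h_i \frac{1}{L^{n-i}} p_{k-n+i}$ rather than something off by a shift or a power of $L$. The cleanest way to control this is probably to argue directly on the infinite-random-word side: set up a coupling (or an explicit measurable map) between random words $d = (d_1, d_2, \dots)$ over the alphabet and trajectories of $X(\bar h)$, for a word $b$ that actually realises the bifix class $h$ via the naive chain $\hat X(b)$, and then observe that $X(\bar h)$ is a lumping/relabelling of $\hat X(b)$ that preserves the event $\{X_k = n\}$. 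One must be a little careful, as the excerpt itself warns, that not every $h$ is realised by some $b$; but the recursion argument is insensitive to that, so the safest route is: prove the recursion for $X(\bar h)$ purely probabilistically (steps i–iv above), and only afterwards remark that $\hat X(b)$ gives the combinatorial motivation. The genuinely delicate point inside (ii) is handling the state $i$ with $s_i = 1$: there the transition $i \to 1$ has probability $\frac1L$ and $i \to 0$ has probability $\frac{L-2}{L}$, whereas when $s_i = 0$ one has $i \to 0$ with probability $\frac{L-1}{L}$; tracking this case split through the first-passage decomposition is where the $h_i$ coefficients enter, and getting the exponent of $L$ right there is the crux.
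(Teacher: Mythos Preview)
Your high-level strategy---show that the Markov-chain $P_k$ obeys the same recursion and initial values, then invoke uniqueness---is exactly the paper's. Where you diverge is in the execution of the crucial step (your step~(ii)): you aim at the long recursion~(\ref{eq:long-recursion}) for $p_k$ via a first-passage / ``last fresh start'' decomposition, and you correctly flag the overlap bookkeeping there as the obstacle you have not yet resolved. The paper sidesteps this difficulty by a different device.

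The paper introduces the auxiliary quantity $P_k(1):=\Pr(X_{k+j}=n\mid X_j=1)$ and writes it in two ways. First, the one-step recursion at state~$0$ (where $s_0=0$) gives
\[
P_k=\tfrac{L-1}{L}P_{k-1}+\tfrac1L P_{k-1}(1),\qquad\text{i.e.}\qquad P_k(1)=LP_{k+1}-(L-1)P_k.
\]
Second, starting from state~$1$, one conditions on the number $i$ of consecutive ascents before the first non-ascent: with probability $L^{-(n-1)}$ the chain climbs straight to $n$; otherwise it reaches state~$i$ (probability $L^{-(i-1)}$) and then either drops to $0$ (probability $\tfrac{L-2}{L}$) or to $s_i=1-h_{n-i}$ (probability $\tfrac1L$), continuing from there. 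This yields
\[
P_k(1)=\frac1{L^{n-1}}+\sum_{i=1}^{n-1}\frac{L-2}{L^{i}}P_{k-i}
+\sum_{i=1}^{n-1}\frac1{L^{i}}\bigl(h_{n-i}P_{k-i}+(1-h_{n-i})P_{k-i}(1)\bigr).
\]
Equating the two expressions and substituting $P_{k-i}(1)=LP_{k-i+1}-(L-1)P_{k-i}$ throughout, the $P_{\bullet}(1)$ terms disappear and one lands directly on the short recursion~(\ref{eq:Short-Recursion}). The initial values~(\ref{eq:Short-Recursion2}) are immediate, as you note in your step~(iii).

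The point is that the only states reachable from a non-ascent are $0$ and $1$, so $P_k(0)=P_k$ and $P_k(1)$ are the only conditional hitting probabilities one ever needs; the two-expression trick eliminates $P_k(1)$ algebraically. This replaces your open-ended step~(ii) by a short telescoping computation and makes the coupling idea (which, as you observe, fails for $h$ not realised by any word $b$) unnecessary.
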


\begin{proof}
We have
$P_k = \frac{L-1}{L} P_{k-1} + \frac1L P_{k-1}(1)$ for $k \geq 1$ or, equivalently, $P_k(1) = L P_{k+1}-(L-1) P_k$ for $k \geq 0$
and on the other hand
\[
P_k(1)= \frac1{L^{n-1}} + 
\sum_{i=1}^{n-1} \frac{L-2}{L^i}P_{k-i} + 
\sum_{i=1}^{n-1} \frac1{L^i} 
\begin{cases}
P_{k-i} \text{ if } h_{n-i}=1 \\
P_{k-i}(1) \text{ if } h_{n-i}=0
\end{cases}
\text{ for } k \geq n-1.
\]
One can see this equation as follows. With probability $\frac1{L^{n-1}}$, the state $1$ changes directly in $n-1$ steps to $n$. Otherwise the state increases by one exactly $i-1$ times in direct sequence and subsequently jumps to $0$ (first summation) or to $s_i=1-h_{n-i}$ (second summation).

Equating the right-hand sides of the last two equations, one gets
\begin{align*}
L P_{k+1}-(L-1) P_k
= & \frac1{L^{n-1}} + \sum_{i=1}^{n-1} \frac1{L^i}((L-2)P_{k-i} \\
& +\underbrace{P_{k-i}(1)}_{LP_{k-i+1}-(L-1)P_{k-i}}+h_{n-i}\underbrace{(P_{k-i}-P_{k-i}(1))}_{\underbrace{P_{k-i}-LP_{k-i+1}+(L-1)P_{k-i}}_{L(P_{k-i}-P_{k-i+1})})} \\
= & \frac1{L^{n-1}} + 
\sum_{i=1}^{n-1} \frac1{L^i}(-P_{k-i}+LP_{k-i+1} + h_{n-i}
L(P_{k-i} - P_{k-i+1} ) \\
= & \frac1{L^{n-1}} + \frac1L LP_k - \frac1{L^{n-1}}P_{k-n+1} + \sum_{i=1}^{n-1} \frac1{L^i} h_{n-i} L(P_{k-i} - P_{k-i+1} ),
\end{align*}
and, after dividing by $L$ and substituting $n-i$ for $i$,
\begin{align*}
P_{k+1} & = \frac1{L^n} + \frac{L-1}L P_k + \frac1L P_k - \frac1{L^n}P_{k-n+1} + \sum_{i=1}^{n-1} \frac1{L^i} h_{n-i} (P_{k-i} - P_{k-i+1} ) \\
 & = \frac1{L^n} + P_k - \frac1{L^n} P_{k+1-n} - \sum_{i=1}^{n-1} h_i \frac1{L^{n-i}} (P_{k-n+i+1}-P_{k-n+i}) \text{ for any } k \geq n,
% \\ & P_1 = \dots = P_{n-1} = 0, P_n = \frac1{L^n}.
\end{align*}
This is in fact the same as (\ref{eq:Short-Recursion}). Obviously, it also holds (\ref{eq:Short-Recursion2}).

\end{proof}

The reasoning towards the remainder of this section is chosen in a rather elementary way, avoiding any use of more than necessary probability theory. Another way would be to use the notions of stochastic domination and couplings between random variables that appears to arise quite naturally in the given situation. However, we did not take this route as it would actually not shorten the argument significantly.

\begin{Le}
\label{monoonk}
$P_k(i)$ is monotonely increasing in $k$, i.e.
\[
P_{k+1}(i) \geq P_k(i) \text{ for } k \geq 0, 0 \leq i \leq n. 
\]
\end{Le}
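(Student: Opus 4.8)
The plan is to show $P_k(i)$ is monotone in $k$ by exhibiting, for each fixed $i$, a coupling-free ``one-step lag'' argument: I want to compare the chain started at $i$ observed at time $k{+}1$ with the chain started at $i$ observed at time $k$, and conclude $P_{k+1}(i)\ge P_k(i)$. The cleanest elementary route is to first establish a \emph{one-step recursion in $i$} together with the absorbing nature of state $n$. Concretely, conditioning on the first transition out of state $i$ (for $i\le n-1$) gives
\[
P_{k+1}(i) \;=\; \tfrac1L P_{k}(i{+}1) \;+\; \tfrac1L P_{k}(s_i) \;+\; \tfrac{L-2}L P_{k}(0),
\]
with the usual convention that the last two terms merge when $s_i=0$, and $P_k(n)=1$ for all $k\ge 0$ while $P_0(i)=0$ for $i\le n-1$. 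This identity, valid for all $k\ge 0$, is the workhorse.

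From here I would run an induction on $k$ (simultaneously over all $i$). The base case $k=0$ is immediate: $P_1(i)\ge 0=P_0(i)$ for $i\le n-1$, and $P_1(n)=P_0(n)=1$. For the inductive step, assume $P_{k}(j)\ge P_{k-1}(j)$ for every $j\in\{0,\dots,n\}$. Then for $i\le n-1$, applying the displayed recursion at level $k{+}1$ and at level $k$ and subtracting,
\[
P_{k+1}(i)-P_k(i) \;=\; \tfrac1L\bigl(P_{k}(i{+}1)-P_{k-1}(i{+}1)\bigr) \;+\; \tfrac1L\bigl(P_{k}(s_i)-P_{k-1}(s_i)\bigr) \;+\; \tfrac{L-2}L\bigl(P_{k}(0)-P_{k-1}(0)\bigr),
\]
and each bracket is $\ge 0$ by the inductive hypothesis, so the whole expression is $\ge 0$; the case $i=n$ is trivial since both sides are $1$. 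This closes the induction and gives the lemma.

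The only genuinely delicate point is the bookkeeping around the coincidence $s_i=0$ (i.e.\ $h_{n-i}=1$ in the relevant translation), where two of the transition probabilities in the definition add together; one must check that the displayed recursion is stated so that it remains correct whether or not $s_i=0$, and likewise that the edge case $i+1=n$ (transition into the absorbing state) is handled by $P_k(n)=1$. None of this is hard, but it is where an otherwise-clean argument could pick up an off-by-one or a miscounted probability mass. I expect no other obstacle: the argument is a straightforward monotone induction once the conditioning identity is in place, and it does not require the identification of these $P_k$ with the combinatorial probabilities of Theorem~\ref{same} — monotonicity is an intrinsic property of the chain $X(s)$ for any admissible $s$.
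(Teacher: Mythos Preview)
Your induction is correct, but it is a considerably heavier argument than the paper's. The paper proves the lemma in one line by observing that the state $n$ is absorbing: since $X_k=n$ implies $X_{k+1}=n$, the event $\{X_k=n\}$ is contained in $\{X_{k+1}=n\}$, and conditioning on $X_0=i$ (using stationarity to take $j=0$ in the definition of $P_k(i)$) immediately gives $P_k(i)\le P_{k+1}(i)$. No recursion, no induction, no case analysis for $s_i=0$ or $i{+}1=n$ is needed.

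Your route---conditioning on the first step and inducting on $k$ simultaneously over all $i$---does work and has the minor virtue that it sets up exactly the recursion used later in the proofs of Lemma~\ref{monooni} and Theorem~\ref{compareMarkov}. But for this particular lemma it obscures the only fact that matters: absorption at $n$ makes $\{X_k=n\}$ a nested increasing family of events in $k$. All your worries about bookkeeping (merging probabilities when $s_i=0$, the edge case $i{+}1=n$) are artifacts of the method, not of the problem.
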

\begin{proof}
This is just because $X_k = n$ implies $X_{k+1} = n$. (Almost surely, but we will skip that specification in general.)
\end{proof}

\begin{Le}
\label{whenzero}
It holds $P_k(i) > 0$ if and only if $k+i \geq n$.
\end{Le}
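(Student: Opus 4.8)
The plan is to prove both directions of the equivalence $P_k(i) > 0 \iff k + i \geq n$ by elementary reasoning about the trajectories of the Markov chain $X(s)$, using the shape of the allowed transitions.

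For the "only if" direction, I would observe that from state $i$ the chain can increase by at most $1$ per step (the transition probabilities are nonzero only for $j = i+1$, $j = s_i$, or $j = 0$, and $s_i \leq i$ by \eqref{eq:jumpdown}, so $j \leq i+1$ always). Hence, starting from $X_j = i$, after $k$ further steps we have $X_{j+k} \leq i + k$ with probability one. If $i + k < n$ then $X_{j+k} \leq i+k < n$, so $\Pr(X_{j+k} = n \mid X_j = i) = 0$, i.e. $P_k(i) = 0$. This handles one direction immediately.

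For the "if" direction, I would exhibit a concrete trajectory of positive probability that reaches $n$: starting from $i$, take the "$+1$" transition (probability $\frac1L > 0$ since $L \geq 2$) at each of the next $n - i$ steps, moving $i \to i+1 \to \cdots \to n$; then the chain is absorbed at $n$ and stays there for the remaining $k - (n-i) \geq 0$ steps (probability $1$). So $P_k(i) \geq (1/L)^{n-i} > 0$ whenever $k \geq n - i$, i.e. $k + i \geq n$. One should double-check that the "$+1$" transition genuinely has probability $\frac1L$ and is not overridden when $s_i = i+1$; but $s_i \leq i < i+1$, so the case $i+1 = j$ in the definition is unambiguous and always carries probability $\frac1L$. (Strictly, when $i = n-1$ the step $i+1 = n$ is the last one, after which we are in the absorbing state; when $k = n - i$ exactly there are no further steps to take, which is fine.)

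There is essentially no obstacle here; the only thing to be careful about is the bookkeeping at the boundary — making sure the argument is correct when $k = n - i$ (trajectory reaches $n$ exactly at the last step, no absorbing steps needed) and when $i = n$ (then $k + i \geq n$ holds for all $k \geq 0$, and indeed $P_k(n) = 1 > 0$, consistent with the absorbing transition $p_{nn} = 1$). With those boundary cases noted, combining the two directions gives the claimed equivalence.
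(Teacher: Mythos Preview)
Your proof is correct and follows essentially the same approach as the paper: the paper's proof simply states $P_k(i) \geq \frac{1}{L^{n-i}}$ for $k+i \geq n$ (your explicit ``all $+1$'' trajectory) and that $P_k(i)=0$ for $k+i<n$ because $X_{k+1} \leq X_k + 1$ (your ``only if'' argument). Your write-up is just a more detailed version of the same two observations.
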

\begin{proof}
 One has $P_k(i) \geq \frac1{L^{n-i}}$ for $k+i \geq n$. On the other hand, if $k+i < n$, then $P_k(i)=0$ because in general $X_{k+1} \leq X_k +1$, so if $X_0=i$ then $X_k \leq i+k < n$.

\end{proof}

\begin{Le}
\label{monooni}
$P_k(i)$ is monotonely increasing in $i$, i.e. 
\[
P_k(i+1) \geq P_k(i) \text{ for } k \geq 0, 0 \leq i \leq n-1.
\]
The inequality is strict if, in addition, $k+i+1 \geq n$, otherwise both sides equal $0$.
\end{Le}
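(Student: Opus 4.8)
The plan is to prove both the weak inequality $P_k(i+1) \geq P_k(i)$ and the strictness (when $k+i+1 \geq n$) simultaneously by induction on $k$. The base cases $k=0$ and the cases with $k+i+1 < n$ are handled directly by Lemma~\ref{whenzero}: if $k+i+1 < n$ then also $k+i < n$, so both $P_k(i+1)$ and $P_k(i)$ vanish; and if $k+i+1 = n$ but we are at small $k$, the left side is already positive by Lemma~\ref{whenzero} while the right side vanishes, giving strictness for free. So the real content is the inductive step for $k \geq 1$ in the regime $k+i \geq n$ (where both sides are positive).

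First I would write down the one-step conditioning identity. Starting from state $i \leq n-1$, the chain moves to $i+1$ with probability $\tfrac1L$, to $s_i$ with probability $\tfrac1L$, and to $0$ with probability $\tfrac{L-2}{L}$ (with the usual collapsing when $s_i = 0$); hence for $0 \leq i \leq n-1$,
\[
P_k(i) = \tfrac1L\, P_{k-1}(i+1) + \tfrac1L\, P_{k-1}(s_i) + \tfrac{L-2}{L}\, P_{k-1}(0),
\]
and similarly for $P_k(i+1)$ when $i+1 \leq n-1$, using $s_{i+1}$; the case $i+1 = n$ is immediate since $P_k(n) = 1 \geq P_k(i)$, with strictness following from Lemma~\ref{whenzero} once $k+i \geq n$ forces $P_k(i) \le 1$ to be... one must check $P_k(i) < 1$, which holds because from state $i < n$ there is positive probability (at least $\tfrac{L-2}{L}$ or, if $L=2$, the jump to $s_i$) of not yet having absorbed, so $P_k(i) < 1$; this sub-point needs a small separate argument. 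For the main case, subtract the two identities to get
\[
P_k(i+1) - P_k(i) = \tfrac1L\big(P_{k-1}(i+2) - P_{k-1}(i+1)\big) + \tfrac1L\big(P_{k-1}(s_{i+1}) - P_{k-1}(s_i)\big).
\]
Now observe the crucial structural fact about $s = \bar h$: since $\bar h = (0, 1-h_{n-1}, \dots, 1-h_1)$, consecutive entries satisfy $|s_{i+1} - s_i| \leq 1$ and in fact $s_{i+1} \in \{0,1\} \supseteq$... more precisely $s_{i+1}, s_i \in \{0,1\}$ for all $i \geq 1$ and $s_0 = 0$, so $s_{i+1} \geq s_i$ fails in general, but $s_{i+1}$ and $s_i$ differ by at most $1$ and lie in $\{0,1\}$; thus $P_{k-1}(s_{i+1}) - P_{k-1}(s_i)$ is either $0$ or $\pm(P_{k-1}(1) - P_{k-1}(0))$, which by the induction hypothesis is $\geq 0$ in absolute value in the right direction only when $s_{i+1} \geq s_i$. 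This is exactly where the argument is delicate.

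The hard part will be controlling the term $P_{k-1}(s_{i+1}) - P_{k-1}(s_i)$, which can a priori be negative (when $h$ dictates $s_{i+1} = 0 < 1 = s_i$); a naive termwise comparison then fails. The fix I would pursue is to not compare $P_k(i+1)$ with $P_k(i)$ directly for every $i$ but to exploit that $s_i \leq 1$ always, so that the jump target is either $0$ or $1$, and to prove the stronger package statement ``$P_k(j) \geq P_k(i)$ for all $0 \leq i \leq j \leq n$'' by a single induction on $k$, unfolding $P_k(j)$ and $P_k(i)$ via the identity above and matching the three transition branches: the $+1$ branch compares $P_{k-1}(j+1)$ with $P_{k-1}(i+1)$ (both in range, apply IH since $i+1 \leq j+1$), the $0$ branch is identical on both sides, and the $s$ branch compares $P_{k-1}(s_j)$ with $P_{k-1}(s_i)$ where now $s_j, s_i \in \{0,1\}$ — here if $s_j \geq s_i$ we apply IH, and if $s_j < s_i$, i.e. $s_j = 0, s_i = 1$, I would absorb the deficit by instead pairing the $+1$ branch of state $i$ against a different branch, or more cleanly by re-deriving $P_k(1) - P_k(0)$ from the global recursion~(\ref{eq:Short-Recursion}) and the relation $P_{k}(1) = L P_{k+1} - (L-1)P_k$ established in the proof of Theorem~\ref{same}, showing $P_k(1) > P_k(0)$ whenever $k+1 \geq n$ directly from $p_{k+1} > 0$. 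Once $P_{k-1}(1) \geq P_{k-1}(0)$ is in hand as a standalone fact, every $s$-branch term is $\geq 0$ and the induction closes; strictness comes from the $+1$ branch, which contributes $P_{k-1}(j+1) - P_{k-1}(i+1) > 0$ as soon as $j > i$ and $k-1+i+1 \geq n$, i.e. precisely $k + i \geq n$, matching the claimed threshold $k+i+1 \geq n$ after reindexing. I expect reconciling the two thresholds and the $L=2$ degeneracy (where the $\tfrac{L-2}{L} = 0$ branch disappears) to require the most care.
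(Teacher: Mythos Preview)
Your termwise comparison approach---expanding both $P_k(i+1)$ and $P_k(i)$ via one-step conditioning and subtracting---creates a genuine obstruction that you yourself identify but do not resolve: the term $\tfrac1L\bigl(P_{k-1}(s_{i+1})-P_{k-1}(s_i)\bigr)$ can be negative, since nothing forces $s_{i+1}\geq s_i$. Your proposed patches do not close this gap. Restricting to $s=\bar h$ so that all $s_j\in\{0,1\}$ is not enough: when $s_i=1$ and $s_{i+1}=0$ the bad sign persists, and knowing $P_{k-1}(1)\geq P_{k-1}(0)$ as a ``standalone fact'' gives exactly the wrong direction for that term. The ``absorb the deficit by re-pairing branches'' idea is left vague, and I do not see a way to make it work. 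Moreover, the lemma is invoked in Theorem~\ref{compareMarkov} for arbitrary $s$ satisfying~\eqref{eq:jumpdown}, not only for $s=\bar h$, so the $\{0,1\}$-valued assumption is already a loss of generality.

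The idea you are missing is short and sidesteps the whole difficulty: do \emph{not} expand $P_k(i+1)$ at all. Expand only $P_k(i)$, and use the constraint $s_i\leq i$ from~\eqref{eq:jumpdown} together with the induction hypothesis to bound each of $P_{k-1}(s_i)$ and $P_{k-1}(0)$ above by $P_{k-1}(i+1)$, yielding
\[
P_k(i)\;\leq\;\tfrac1L P_{k-1}(i+1)+\tfrac{L-1}{L}P_{k-1}(i+1)\;=\;P_{k-1}(i+1).
\]
Then invoke Lemma~\ref{monoonk} (monotonicity in $k$), which you never use, to get $P_{k-1}(i+1)\leq P_k(i+1)$. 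Strictness when $k+i\geq n$ comes from the first inequality (since $s_i\leq i<i+1$ and the step $P_{k-1}(i)<P_{k-1}(i+1)$ is strict by the induction hypothesis at level $k-1$), while in the boundary case $k+i+1=n$ it comes from the second inequality via Lemma~\ref{whenzero}, as $P_{k-1}(i+1)=0<P_k(i+1)$. No comparison between $s_i$ and $s_{i+1}$ is ever needed, and the argument works for general $s$.
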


\begin{proof}
We proceed by induction on $k$. $k=0$ is trivial because then only $i = n-1$ fulfills the requirement that $k+i+1 \geq n$, and then $P_0(n) = 1$ and $P_0(n-1) = 0$. For the induction step, we have 
\begin{align*}
P_k(i) & = \frac1L P_{k-1}(i+1) + \frac1L P_{k-1} (s_i) + \frac{L-2}L  P_{k-1}(0) \\
& \leq \frac1L P_{k-1}(i+1) + \frac{L-1}L P_{k-1}(i+1) = P_{k-1}(i+1) \leq P_k(i+1),
\end{align*}
where the first inequality is by induction hypothesis - note that $s_i \leq i+1$ and $0<i+1$ - and the last one is by lemma \ref{monoonk}.
In addition, if $k+i \geq n$, the first inequality is strict, by induction hypothesis. If instead $k+i+1 = n$, the last inequality is strict because then $P_{k-1}(i+1) = 0$ but $P_k(i+1) > 0$ by Lemma \ref{whenzero}.
\end{proof}

\begin{Th}(comparison of Markov chains)
\label{compareMarkov}
Suppose we are given two $n$-ary words $s = (s_0, \dots s_{n-1})$ and $s' = (s'_0, \dots s'_{n-1})$ which fullfill (\ref{eq:jumpdown}).
Assume further that $s > s'$, i.e. component-wise $s \geq s'$ but $s \neq s'$. Then for the associated Markov chains $X(s)$ and $X'(s) := X(s')$ we have $P_k > P'_k$ for all $k \geq k_0 := n + 1 + \min \{i-s_i \, | \, 0 \leq i \leq n-1 \text{ with } s_i>s_i' \}$ and $P_k = P'_k$ for all $k < k_0.$
\end{Th}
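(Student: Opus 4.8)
The plan is to pass to the difference $D_k(i):=P_k(i)-P'_k(i)$ of the two chains and to determine exactly for which pairs $(k,i)$ it is strictly positive. Write the transition recursion in the unified form $P_k(i)=\tfrac1L P_{k-1}(i+1)+\tfrac1L P_{k-1}(s_i)+\tfrac{L-2}L P_{k-1}(0)$, valid for all $0\le i\le n-1$, $k\ge1$ (when $s_i=0$ the middle and last terms simply merge), and subtract the analogous identity for $X(s')$. Splitting $P_{k-1}(s_i)-P'_{k-1}(s'_i)=D_{k-1}(s_i)+\bigl(P'_{k-1}(s_i)-P'_{k-1}(s'_i)\bigr)$ one obtains
\[
D_k(i)=\tfrac1L D_{k-1}(i+1)+\tfrac1L D_{k-1}(s_i)+\tfrac{L-2}L D_{k-1}(0)+\tfrac1L\,g_k(i),\qquad g_k(i):=P'_{k-1}(s_i)-P'_{k-1}(s'_i).
\]
Since $s_i\ge s'_i$, Lemma~\ref{monooni} (applied to $X(s')$) gives $g_k(i)\ge0$; together with $D_0(\,\cdot\,)\equiv0$ and $D_k(n)\equiv0$, a one-line induction on $k$ then yields $D_k(i)\ge0$ for all $k,i$, in particular $P_k\ge P'_k$ for every $k$. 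This is the expected easy half: by Lemma~\ref{monooni} the chain with the larger $s$ always lands on the better state after a downward jump.

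The quantitative heart is the exact sign of the forcing term. Writing $g_k(i)=\sum_{\ell=s'_i}^{s_i-1}\bigl(P'_{k-1}(\ell+1)-P'_{k-1}(\ell)\bigr)$ and feeding each summand into the strictness clause of Lemma~\ref{monooni} (the decisive one being $\ell=s_i-1$), one sees that $g_k(i)>0$ \emph{precisely} when $s_i>s'_i$ and $k\ge n+1-s_i$. For the equality assertion I would then prove the sharper statement that $D_k(i)=0$ whenever $k+i<k_0$, by induction on $k$, the case $k=0$ being trivial. In the step, each of $D_{k-1}(i+1)$, $D_{k-1}(0)$, $D_{k-1}(s_i)$ vanishes by the induction hypothesis (using $s_i\le i$ where needed to keep the relevant sum of time-plus-index below $k_0$), and $g_k(i)=0$, for if $s_i>s'_i$ then $k_0\le n+1+(i-s_i)$ by definition of $k_0$, which with $k+i<k_0$ forces $k<n+1-s_i$. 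Specializing to $i=0$ gives $P_k=P'_k$ for $k<k_0$.

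For the strict inequality, let $i^{*}$ attain the minimum in the definition of $k_0$, so $s_{i^{*}}>s'_{i^{*}}$ and $k_0=n+1+(i^{*}-s_{i^{*}})$; note $i^{*}\ge1$ because $s_0=s'_0=0$. Given $k\ge k_0$, set $k':=k-i^{*}$, so $k'\ge n+1-s_{i^{*}}\ge1$ and hence $g_{k'}(i^{*})>0$ by the criterion above; therefore $D_{k'}(i^{*})\ge\tfrac1L g_{k'}(i^{*})>0$. Dropping all but the first summand of the $D$-recursion gives $D_m(\ell)\ge\tfrac1L D_{m-1}(\ell+1)$, and iterating this $i^{*}$ times propagates strict positivity down the index axis: $D_{k'+t}(i^{*}-t)>0$ for $t=0,1,\dots,i^{*}$, and at $t=i^{*}$ this reads $D_k(0)=P_k-P'_k>0$.

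The only genuinely delicate point is pinning down the constant $k_0$ exactly rather than up to additive slack; this is precisely what the sharp ``strict if and only if $k+\ell\ge n$'' form of Lemma~\ref{monooni} buys, fixing both the earliest time $k=n+1-s_i$ at which a forcing term at index $i$ switches on and, through the down-propagation, the earliest time $n+1+(i-s_i)$ at which it reaches index $0$. No case split on $L$ is needed: the only transition invoked in the propagation has probability $\tfrac1L>0$ for every $L\ge2$, and the merging of transitions when $s_i=0$ is already built into the unified recursion.
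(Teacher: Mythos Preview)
Your argument is correct and follows essentially the same route as the paper: induction on $k$ through the one-step recursion $P_k(i)=\tfrac1L P_{k-1}(i+1)+\tfrac1L P_{k-1}(s_i)+\tfrac{L-2}L P_{k-1}(0)$, with Lemma~\ref{monooni} supplying both the weak comparison $P'_{k-1}(s_i)\ge P'_{k-1}(s'_i)$ and the exact threshold for strictness, and Lemma~\ref{whenzero} handling the equality regime. The only difference is organizational: the paper bundles everything into a single induction proving $P_k(i)>P'_k(i)$ for all $i\le i^{*}$ with $k+i\ge k_0$ (so strictness at $(k,i)$ with $i<i^{*}$ comes directly from the hypothesis at $(k-1,i+1)$), whereas you isolate the forcing term $g_k(i)$, pin down its sign, and then run the same ``one step up in $i$, one step back in $k$'' mechanism as a separate propagation from $(k',i^{*})$ down to $(k,0)$; this is the same idea unwound along a single diagonal rather than carried as a strengthened induction hypothesis.
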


\begin{proof}
Fix $i^*$ with $i^*-s_{i^*} = \min \{i-s_i \, | \, 0 \leq i \leq n-1 \text{ with } s_i>s_i' \}$.

More generally than the theorem, we show that $P_k(i) \geq P'_k(i)$ and that

$P_k(i) > P'_k(i)$  if $k+i \geq n+1+i^*-s_{i^*}$ and $i \leq i^*$ and that

$P_k(i) = P'_k(i)$  if $k+i < n+1+i^*-s_{i^*}$. (For the remaining cases we make no statement about whether the inequality is strict.)

We proceed by induction on $k$. The case $k=0$ is trivial. For the induction step, we have 
\[P_k(i) = \frac1L P_{k-1}(i+1) + \frac1L P_{k-1} (s_i) + \frac{L-2}L  P_{k-1}(0) \]
and
\[P'_k(i) = \frac1L P'_{k-1}(i+1) + \frac1L P'_{k-1}(s_i') + \frac{L-2}L  P'_{k-1}(0). \]

Now we compare the summands on the right-hand side:
$P_{k-1}(i+1) \geq P'_{k-1}(i+1)$ and $P_{k-1}(0) \geq P'_{k-1}(0)$ by the induction hypothesis and 
$P_{k-1}(s_i) \geq P'_{k-1}(s_i')$ by the induction hyptothesis, $s_i \geq s_i'$ and Lemma \ref{monooni}. 

For the strictness statement if $k+i \geq n+1+i^*-s_{i^*}$ and $i \leq i^*$, note that $P_{k-1}(i+1) > P'_{k-1}(i+1)$ if $i < i^*$ by induction hypothesis, while if $i=i^*$ then $s_i > s_i'$ and so $P_{k-1}(s_i) \geq P'_{k-1}(s_i) >  P'_{k-1}(s_i')$ by the induction hypothesis and Lemma \ref{monooni} as $k-1+s_i \geq n+i^*-i = n$.

To show $P_k(i) = P'_k(i)$  if $k+i < n+1+i^*-s_{i^*}$, again we proceed by induction. 
For the induction step, note that the summands on the right-hand side of the recursion mutually agree. If $s_i= s'_i$ this is by induction hypothesis, otherwise one notes additionally that $k+i < n+1+i^*-s_{i^*} \leq n+1+i-s_i$, so $k -1+s_i < n$, so $P_{k-1} (s_i)=0$ by Lemma \ref{whenzero}.
\end{proof}

Now we are in a position to proof Theorem \ref{mainTh} concluding our work.

\begin{proof}[Proof of theorem \ref{mainTh}]

An argument for part a) of the theorem was already mentioned at the beginning of section 2.
Now for part b), we have the words $b$ and $b'$, their corresponding bifix patterns $h$ and $h'$ and define $s:=\bar h$ and $s' := \bar h'$. $s$ and $s'$ now fulfill the requirement of theorem \ref{compareMarkov}. The two theorems treat probabilities $P_k$ that are equal by theorem \ref{same} and numbers $k_0$ which are equal because of
\[
\min \{ 1 \leq i \leq n-1 | h_i=0 \text{ and } h'_i=1\} = 1+\min \{i-s_i \, | \, 0 \leq i \leq n-1 \text{ with } s_i>s_i' \}.
\]
Hence, we can use the implication of theorem \ref{compareMarkov} that $P_k > P'_k$ for all $k \geq k_0$ and $P_k = P'_k$ for all $k < k_0.$ which applies also for theorem \ref{mainTh} and we are done.

\end{proof}

\section{Summary of notation}

The following table is just thought as a guide for the reader. Our text does not completely follow it, especially for indices like $n$, $k$ and so on.

\begin{table}[!h]
\centering
\begin{tabular}{|c|c|} \hline
$L$ & size of the alphabet \\ \hline
$b$ & word that is searched for \\ \hline
$n$ & length of $b$ \\ \hline
$h$ & bifix indicator \\ \hline
$d$ & (finite) random word  or (infinite) random sequence \\ \hline
$k$ & length of a random word $d$ or \\  
    & index until which an infinite random sequence $d$ is observed \\ \hline
$P_k$ & probability of $b$ being a subword in a random word of length $k$ \\ \hline
$p_k$ & probability of $b$ being a subword in a random word of length $k$ \\ 
     & only at the end \\ \hline
$s$ & a word from $\{0,\dots,n-1\}^{\{0,\dots,n-1\}}$, will be derived from $h$ \\ \hline
$X(s)$ & a Markov chain derived from $s$ (and thus from $h$) \\ \hline
$p_{ij}$ & transition probability for a Markov chain form $i$ to $j$ \\ \hline
$P_k(i)$ & probability to get form $i$ to $n$ in at most $k$ steps  \\ \hline
$h', P'_k, X'_k$ & are derived from $b'$ the same way as $h, P_k, X_k$ from $b$.  \\ \hline

\end{tabular}
\end{table}

\bibliography{references}
\bibliographystyle{plain}

\end{document}